\documentclass{amsart}
\usepackage{graphicx}
\usepackage{amsthm,amsfonts,amssymb}
\usepackage{amsmath}
\usepackage{hyperref}
\usepackage[latin1]{inputenc}
\usepackage{latexsym}

\newtheorem{theor}{Theorem}[section]
\newtheorem{lemma}{Lemma}[section]
\newtheorem{proposition}{Proposition}[section]
\newtheorem{corollary}{Corollary}[section]
\newtheorem{defi}{Definition}[section]

\newcommand \be{\begin{eqnarray*}}
\newcommand \ee{\end{eqnarray*}}
\newcommand \ben{\begin{eqnarray}}
\newcommand \een{\end{eqnarray}}

\newcommand{\bigO}[1]{\mathcal O\pa{#1}}

\newcommand{\Pnt}{\mathbb W_n}

\renewcommand{\P}{\mathbb P}

\newcommand{\R}{\mathbb R}

\newcommand{\N}{\mathbb N}
\newcommand{\ac}[1]{\left\{#1\right\}}
\newcommand{\norm}[1]{\left\Vert#1\right\Vert}
\newcommand{\pa}[1]{\left(#1\right)}

\def \build#1#2#3{\mathrel{\mathop{\kern 0pt#1}\limits_{#2}^{#3}}}

\def \esp#1#2{\mathbb E_{#1}\left[#2\right]}
\def \build#1#2#3{\mathrel{\mathop{\kern 0pt#1}\limits_{#2}^{#3}}}

\DeclareGraphicsRule{*}{mps}{*}{}

\def\AAk#1#2{\setbox\AAbo=\hbox{#2}\AAdi=\wd\AAbo\kern#1\AAdi{}}%

\newdimen\AAdi%
\newbox\AAbo%

\title[Factors in finite long length words]{Positions of the ranks of factors in certain finite long length
words}

\begin{document}

\author{Elahe Zohoorian Azad}
\address{School of mathematics and computer sciences, Damghan
university, Iran, p.o.Box 36716-41167}
 \email{zohorian@du.ac.ir}

\subjclass{68R15, 60B10, 68Q25}

\maketitle
We consider the set of finite random
words $\mathcal A^\star$, with independent letters drawn from a
finite or infinite totally ordered alphabet according to a general
probability distribution. On a specific subset of $\mathcal
A^\star$, considering certain factorization of the words which are
labelled with the ranks, base on the lexicographical order, we
prove that the normalized position of the ranks of factors,
 are uniform, when the length of the word goes to infinity.



\section{Introduction}
\label{intro}
 We consider a general probability distribution
$(p_{i})_{i\geq 1}$ ($p_{i}>0$) on a set $\mathcal A=\ac{a_{1}<
a_{2}<\dots}$ of letters, and we assume, without loss of
generality, that $0<p_{1}<1$. On the corresponding set of words,
$\mathcal A^\star=\bigcup_{n\ge 0}\mathcal A_n$ ($\mathcal A_n$
the set of words with length $n$), considering the word
$w\in\mathcal A_n$ that can be expressed as
$w=a_{\ell_{1}}a_{\ell_{2}}\dots a_{\ell_{n}}$ ($\ell_i$'s are the
positive integer numbers), we define the weight $p(w)$ as
\[
p(w) = p_{\ell_{1}}p_{\ell_{2}}\dots p_{\ell_{n}}.
\]
With the weight $p(.)$, comes a probability measure on $\mathcal
A_n$, $\mathbb P_{n}(\{w\})=p(w)$, and thus a probability measure
on any subset of $\mathcal A_n$. Letting $\mathfrak F_n$ denotes
the $\sigma$-algebra generated by $\mathcal A_n$, the triple of
$(\mathcal A_n,\mathfrak F_n,\mathbb P_{n})$ is the corresponding
probability
space.\\

We recall here some general definitions from \cite{Lothaire} (readers can see also
\cite{Reutenauer,Lothaire1,Lothaire2}). A word $v$ is a \emph{factor} of a word $w$ if
there exists two other words $s$ and $t$, possibly empty, such
that $w = svt$. If $s$ is empty $v$ is a \emph{prefix} (or a right
factor) of $w$ and if $t$ is empty $v$ is a \emph{suffix} (or a
left factor) of $w$.

 A \emph{lexicographic order} on the set of
words $\mathcal A^\star$ is given by a total order on the alphabet
$\mathcal A=\{a_1,a_2,\dots\}$ extended to the words in the
following way: A word $u$, is said to be \emph{smaller} than a
word $v$ if $u$ is a prefix of $v$ or $u=r a_i s$ and $v=r a_j t$
such that $i<j$ and $r,\ s$ and $t$ be some words, possibly empty.

For $w=w_1 \dots w_n$, a word in $\mathcal A_n$ ($n>0$), we define
$\tau w=w_2 \dots w_n w_1$. Then $\langle\tau\rangle=\{Id,\tau,
\dots ,\tau^{n-1}\}$ is the group of cyclic permutations of the
letters of a word with length $n$. The orbit $\langle w\rangle$ of
a word $w$ under $\langle\tau\rangle$ is called a
\textit{necklace}.

A word $w \in \mathcal A_n$ ($n>0$) is called \textit{primitive}
if its necklace $\langle w\rangle$ has exactly $n$ elements. In
other words, a word $w \in \mathcal A_n$ is primitive if it is not
a power of another word in $\mathcal A^*$ (remark that a word $w$
is a power of another word $u$, if $w$ can be written as $w =
uuu\dots u$). Denote by $\mathcal P_n$ the set of primitive words
in $\mathcal A_n$ and by $\mathcal N_n$
its complement.\\

In this article we work on a subset of $\mathcal P_n$ ($n>0$)
containing the primitive words which begin with a run of the own
smallest letter and end with a run of a letter different from the
smallest. We denote this subset by $\mathcal W_n$. For example the
word $a_2 a_3 a_4 a_2 a_3$ is a word in $\mathcal W_5$. The set of
$\mathcal W_n$ contains the words with certain properties that can
be interesting in some applications of combinatorial on words. The
Lyndon words with length $n$ (the words which are strictly smaller
than any their proper suffix), for example, are included in
$\mathcal W_n$. We consider then the probability measure $\mathbb
W_{n}$ on $\mathcal W_n$ (the conditioning probability in the
probability space of $(\mathcal A_n,\mathfrak F_n,\mathbb
P_{n})$):
\begin{equation}
\label{PW} \mathbb W_{n}(\{w\}) = \frac{\mathbb
P_{n}(\{w\})}{\mathbb P_{n}(\mathcal W_n)}.
\end{equation}

Now, we divide the words of $\mathcal W_n$ to the factors that we
call the \emph{blocks of word}, in the following way:
\begin{defi}
\label{defi1} Let $w$ be a word in $\mathcal W_n$ and $a_w$ the
smallest letter of $w$ (remark that $w$ begins with $a_w$). The
blocks of $w$ are the factors of $w$ that begin with a run of
``$a_w$'' and end just before very next run of ``$a_w$''.
\end{defi}

Thus, the blocks of a word are the factors in the form ${a_w}^{k_0}
a_{\ell_{1}}^{k_1} a_{\ell_{2}}^{k_2}\dots a_{\ell_{m}}^{k_m}$
such that $\ a_{\ell_{i}}\neq a_w$ and
$k_0> 0$. \\

Now, concerning the lexicographical order, each block of a word
$w$ in $\mathcal W_n$ can be \emph{ranked}, according to the order
of the word in the necklace of $\langle w\rangle$ which begins by
the mentioned block. For example, in the word
 $w= a c a^2 b a c d b a^2 b a^3
d^2 b$, the blocks and their related ranks below them,
are:$$\build{ \build{a c }{}{}}{4}{}, \build{\build{a^2 b
}{}{}}{3}{},\build{\build{a c d b}{}{}}{5}{}, \build{\build{a^2
b}{}{}}{2}{}, \build{\build{a^3 d^2 b}{}{}}{1}{},
$$
as there is
 the following order between the five words of the necklace
 $\langle w\rangle$, which begin by the blocks of $w$:
\begin{eqnarray*}
a^3 d^2 ba c a^2 b a c d b a^2 b & \leq & a^2 b a^3 d^2 ba c a^2 b
a c d b \ \ \leq \ \  a^2 b a c d b a^2 b a^3 d^2 b a c
\\
&\leq & a c a^2 b a c d b a^2 b a^3 d^2 b\ \ \leq \ \ a c d b a^2 b a^3
d^2 ba c a^2 b.
\end{eqnarray*}

In this work, we are interested in the limiting distribution of
the positions of ranks of a random word of $\mathcal W_n$, which
seems to be uniform. It is trivial that if the uniform random
permutation of the blocks, causes the uniform displacements of the
ranks, the purpose would be entailed. But in general, it is not
true because of the existence of the equal blocks. In fact, it is
well possible that certain permutations of ranks are not produced.
For example, for the word $w=a^2b, a^2 b, ab, ab$ with the
respective ranks $1,2,4,3$ of its blocks, the permutation
$1,2,3,4$ of the ranks is not produced by any permutation of the
blocks of $w$, no more any permutation of the cyclic permutation
of $1,2,3,4$ as $4,1,2,3$. Moreover, certain permutations of the
blocks of a word, like the permutation $a^2b, ab, a^2 b, ab$ of
the blocks of $w$, produce the non-primitive words for which the
ranks of the blocks are not defined. Nevertheless, by definition
of the ranks, it is trivial that the cyclic permutations of the
blocks of a word causes the cyclic permutations of the related
ranks. On the other hand, in any orbit of the cyclic permutation
of the ranks, the ranks are uniformly distributed on all
positions; the fact that is the key of the prove.\\

 Marchand \& Zohoorian in \cite[Section 6]{Zoh},
for a random word of $\{a,b\}^n$ when $n$ is sufficiently large,
demonstrate hardly that the uniform permutation of the blocks of
the word, barring the block with rank 1, entails the uniform
displacement of the rank 2 on all possible positions. However, our
result may be applied also in the analyze of the height of the
labelled binary trees emerged by the successive iterations of the
standard factorization of the Lyndon words (an introduction on
Lyndon trees is given by Marchand \& Zohoorian in \cite[Section
1]{Zoh} or by Bassino \emph{et al.} in \cite{Bassino}). In fact,
the height of Lyndon trees has a direct relation with the
positions of the ranks in the root word. The study on the
structure of Lyndon trees is a work in progress. The main theorem
that we will prove in this article is:

\begin{theor} \label{main} The positions of ranks in
a random word of $\mathcal W_n$, divided by
$\frac{p_1(1-p_1)n}{2}$, converge in law, when $n$ goes to
infinity, to $$ \mu(dx) =
 \mathbf{1}_{[0,1]}(x)dx,
$$
where $dx$ denotes the Lebesgue measure on $\R$.
\end{theor}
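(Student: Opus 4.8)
The plan is to exploit the cyclic symmetry at the level of blocks, as announced in the introduction. Write $\sigma$ for the map on $\mathcal W_n$ that cyclically rotates the blocks of a word by one step, sending the first block to the end. Since every block begins with a run of the smallest letter $a_w$ and ends with a non-$a_w$ letter, $\sigma(w)$ again begins with a run of its smallest letter and ends with a different one; moreover $\sigma$ is the restriction of an ordinary letter-rotation, so it preserves primitivity. Hence $\sigma$ maps $\mathcal W_n$ into itself. Because the weight $p(\cdot)$ is a product of letter-probabilities it is invariant under any cyclic letter-rotation, so $p(\sigma(w))=p(w)$ and $\sigma$ preserves the conditional measure $\mathbb W_n$. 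For a word $w$ with $B$ blocks, primitivity forces the $B$ block-rotations $w,\sigma(w),\dots,\sigma^{B-1}(w)$ to be pairwise distinct, so each $\sigma$-orbit in $\mathcal W_n$ has exactly $B$ elements, all of equal $\mathbb W_n$-weight.

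First I would record the effect of $\sigma$ on ranks. Rotating the blocks does not change the set of rotations of the necklace $\langle w\rangle$ taken at block boundaries, so the lexicographic rank attached to a given block is unchanged; only its index in the block sequence decreases by one modulo $B$. Consequently, as $w$ ranges over a single $\sigma$-orbit, the position of the block carrying rank $r$ runs through $\{1,2,\dots,B\}$ exactly once, \emph{for every fixed} $r$. Since $\mathbb W_n$ is uniform on each orbit, this yields the exact identity that, conditionally on the number of blocks $B$, the position of rank $r$ is uniformly distributed on $\{1,\dots,B\}$. This is the heart of the argument and uses nothing beyond primitivity and the rotation-invariance of the weight; in particular it explains why every rank, including rank $1$, has the same positional law.

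It then remains to control the number of blocks $B$. Under the product measure $\mathbb P_n$, $B$ equals the number of maximal runs of $a_1$, that is $B=\sum_i \mathbf 1\{w_i=a_1,\ w_{i-1}\neq a_1\}$, a sum of one-dependent indicator variables whose mean is linear in $n$ and whose variance is of order $n$; a law of large numbers for this run-count then shows that $B$, divided by the deterministic normalizing sequence of the statement, converges to $1$ in $\mathbb P_n$-probability. I would transfer this concentration to $\mathbb W_n$ by showing that $\mathbb P_n(\mathcal W_n)$ stays bounded away from $0$: the non-primitive words form an exponentially negligible set, so $\mathbb P_n(\mathcal P_n)\to 1$, while the two boundary constraints (a leading run of $a_1$ and a trailing non-$a_1$ letter) cost only a constant factor and perturb $B$ by $O(1)$. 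Thus the concentration of $B$ survives the conditioning, and in particular $B\to\infty$ in $\mathbb W_n$-probability.

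Finally I would combine the two ingredients. Conditionally on $B$, the rescaled position of rank $r$ is uniform on the grid $\{1/B,2/B,\dots,1\}$, whose law converges weakly to Lebesgue measure on $[0,1]$ as $B\to\infty$; since $B\to\infty$ in probability and the normalizing sequence is a deterministic equivalent of $B$, a routine conditioning argument — integrating a bounded continuous test function against the conditional law and applying dominated convergence — upgrades this to the unconditional weak convergence to $\mu(dx)=\mathbf 1_{[0,1]}(x)\,dx$. The main obstacle is not the symmetry step, which is exact, but the probabilistic control of $B$ under the \emph{conditioned} law $\mathbb W_n$: one must verify both the lower bound on $\mathbb P_n(\mathcal W_n)$ and that the conditioning does not destroy the concentration of the run-count, which is where the bulk of the technical estimates will lie.
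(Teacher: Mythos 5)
Your symmetry step is sound and is essentially the paper's own argument (Propositions \ref{invariant} and \ref{BO}, Corollary \ref{uniposition}, Lemma \ref{condition2}): rotation of blocks permutes the ranks cyclically, orbits have full size by primitivity, and equal weights within an orbit give exact conditional uniformity of each rank's position given the number of blocks. In fact your version is cleaner than the paper's in two respects: you observe that a cyclic permutation of blocks is a particular cyclic permutation of \emph{letters}, so it preserves primitivity immediately (the paper proves Proposition \ref{BO} by a longer periodicity argument), and by disintegrating directly over rotation orbits you bypass the classes $C(w)$ and the partition of Proposition \ref{equivclass} altogether. Your transfer of concentration to $\mathbb W_n$ (via $\mathbb P_n(\mathcal W_n)$ bounded away from $0$) also differs from the paper, which works through the map $\phi$ of Lemma \ref{usarg} and quotes Lemma \ref{N} from the literature; both routes are legitimate.

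The gap is in the law-of-large-numbers step, and it is load-bearing. The number of blocks equals the number of runs of $a_1$, whose expectation under $\mathbb P_n$ is $p_1+(n-1)p_1(1-p_1)\sim p_1(1-p_1)n$, which is \emph{twice} the normalizer $\frac{p_1(1-p_1)n}{2}$ of the statement. So your run-count LLN gives $B\,/\,\frac{p_1(1-p_1)n}{2}\to 2$, not $1$, and the normalizing sequence of the statement is \emph{not} a deterministic equivalent of $B$. Feeding the correct constant into your (otherwise valid) final step, the rescaled position converges to the uniform law $\frac12\mathbf 1_{[0,2]}(x)\,dx$ on $[0,2]$, not to $\mathbf 1_{[0,1]}(x)\,dx$; equivalently, what your argument actually proves is the theorem with normalization $p_1(1-p_1)n$ (or with the position divided by $N_n$ itself). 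You should be aware that this factor-of-two tension is present in the paper as well: Lemma \ref{N} provides only the one-sided bound $N_n\ge \frac{p_1(1-p_1)}{2}n$ with high probability, while the key display (\ref{desint2}) implicitly treats $N_n$ as if it were equal to that lower bound, although $N_n$ in fact concentrates at twice that value. So the discrepancy is not a defect of your strategy — which is the right one — but it cannot be papered over by asserting that the LLN constant matches the statement; an honest computation of the mean forces either the normalizer or the limiting law in the statement to be corrected.
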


 \section{Number of the blocks of a word}
\label{bloc} To begin, we concentrate on the number of the blocks
of a random word in $\mathcal W_n$. It is evident that the number
of the blocks of a word in $\mathcal W_n$ is equal to the number
of the runs of its smallest letter. Now, we define the following
function on $\mathcal P_n$ that carries the words of $\mathcal
P_n$ to its subset $\mathcal W_n$:

\begin{defi}
\label{pi} Let $\phi$ denotes the function on $\mathcal P_n$ that
brings any word $w$ of $\mathcal P_n$ to itself if $w$ is a word
in $\mathcal W_n$, and otherwise to the word of $\langle w\rangle$
which begins by the last run of the smallest letter of $w$ (see
the following example).
\end{defi}
\vspace{0.5cm}
 \textbf{Example.}\begin{itemize}
\item If $w=aba^2 cb^2 a^2$, then $\phi(w)=a^3 b a^2 c b^2$. \item
If $A=\{ab a^2 c,acbab \}\subset \mathcal W_5$ then
${{\phi}^{-1}}(A)=\{ab a^2 c,b a^2 ca\}\cup\{a cbab,cbaba ,baba
c\}$.
\end{itemize}
\vspace{0.5cm} We remark that $\phi$ is a surjective map on $\mathcal
P_n$ to $\mathcal W_n$ and the inverse
 image of $w\in\mathcal W_n$ is a subset of $\mathcal P_n$
 whose cardinality is equal to the length of the first block
 of $w$.\\

  The next lemma allows to transfer
results from random words of $\mathcal A_n$ to random words of
$\mathcal W_n$. For this mean, we set
\[\norm{p}_{\alpha}=\pa{\sum_{i}p_{i}^{\alpha}}^{1/\alpha},\] for $\alpha\ge
1$.\\

\textbf{Notation.} If $a$ and $b$ are two functions from $\N$ into
$\R$, $a(n)=\bigO{b(n)}  \Leftrightarrow  \exists c>0, \; \forall
n \in \N, \; |a(n)| \le c |b(n)|$.\\

\begin{lemma}
\label{usarg} For $A \subset \mathcal W_n$, we have:
 $$\mid \Pnt (A)- \P_{n}({\phi}^{-1} (A)) \mid = \bigO{\norm{p}_{2}^{n}}.$$
\end{lemma}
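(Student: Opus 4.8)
The plan is to lean on two facts. First, the weight $p(\cdot)$ is invariant under the cyclic group $\langle\tau\rangle$, hence constant along every necklace. Second, a single word is exponentially light: $p(w)\le\norm{p}_2^{n}$ for all $w\in\mathcal A_n$, since each factor obeys $p_{\ell_j}\le\max_i p_i\le\pa{\sum_i p_i^{2}}^{1/2}=\norm{p}_2$. The first fact lets me evaluate $\P_{n}\pa{\phi^{-1}(A)}$ exactly; the second, together with the scarcity of non-primitive words, bounds what is left over.

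First I would compute the right-hand side. As $\phi(v)\in\langle v\rangle$ always, each fibre $\phi^{-1}(w)$ sits inside $\langle w\rangle$, so its elements are cyclic shifts of $w$ and all carry the weight $p(w)$. Using the remark after Definition~\ref{pi}, that $\abs{\phi^{-1}(w)}$ is the length of the first block of $w$, the disjoint union $\phi^{-1}(A)=\bigcup_{w\in A}\phi^{-1}(w)$ gives $\P_{n}\pa{\phi^{-1}(A)}=\sum_{w\in A}\abs{\phi^{-1}(w)}\,p(w)$. Specialising to $A=\mathcal W_n$ and using that the first-block lengths of the $\mathcal W_n$-words in a fixed necklace sum to $n$, this collapses to $\P_{n}\pa{\phi^{-1}(\mathcal W_n)}=\P_{n}(\mathcal P_n)$, so the two sides of the lemma can disagree only through the non-primitive words.

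Next I would estimate $\P_{n}(\mathcal N_n)$. A non-primitive word is $u^{d}$ with $d\mid n$, $d\ge2$, and $u\in\mathcal A_{n/d}$; the mass of this class is $\sum_{u\in\mathcal A_{n/d}}p(u)^{d}=\pa{\sum_i p_i^{d}}^{n/d}$. The elementary bound $\sum_i p_i^{d}\le\norm{p}_2^{\,d}$ for $d\ge2$ (again from $\max_i p_i\le\norm{p}_2$) makes each such contribution at most $\norm{p}_2^{\,n}$, and summing over the divisors of $n$ yields $\P_{n}(\mathcal N_n)=\bigO{\norm{p}_2^{\,n}}$. I would use this twice: to replace $\mathcal W_n$ by its primitivity-free analogue when controlling the normaliser $\P_{n}(\mathcal W_n)$, and to discard the non-primitive shifts that would otherwise corrupt the fibre count above.

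Finally I would reconcile the two expressions, writing $\Pnt(A)=\P_{n}(A)/\P_{n}(\mathcal W_n)=\sum_{w\in A}p(w)/\P_{n}(\mathcal W_n)$ and subtracting the closed form. The decisive identity is the necklace-wise one, $\sum_{w\in\mathcal W_n\cap\langle v\rangle}\abs{\phi^{-1}(w)}=n$, which has to be weighed against the conditional normalisation. The step I expect to be the main obstacle is exactly this reconciliation: a naive termwise comparison is too lossy, because $\abs{\phi^{-1}(w)}$ is of order one while $\P_{n}(\mathcal W_n)^{-1}$ is a fixed constant, so the per-word defect need not be small. One must instead aggregate over each necklace, where the fibre lengths sum to $n$ and the number of blocks concentrates, and pin down the exact relationship between that block count and the normaliser $\P_{n}(\mathcal W_n)$; only then can the whole discrepancy be reduced to the mass $\P_{n}(\mathcal N_n)=\bigO{\norm{p}_2^{n}}$, uniformly over arbitrary $A\subset\mathcal W_n$. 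This passage from a necklace-level agreement to the claimed uniform bound is the delicate point on which the lemma turns.
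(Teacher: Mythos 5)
Your proposal is not yet a proof, and the gap you flag at the end is not closable: the step you call ``the delicate point on which the lemma turns'' cannot be carried out, because the statement, read uniformly in $A$ as it must be, is false. Your own preliminary computations show why. Since $\abs{\phi^{-1}(w)}$ equals the length $\ell_1(w)$ of the first block of $w$ (the remark after Definition \ref{pi}), you correctly obtain $\P_n\pa{\phi^{-1}(A)}=\sum_{w\in A}\ell_1(w)\,p(w)$, whereas $\Pnt(A)=\sum_{w\in A}p(w)/\P_n(\mathcal W_n)$. These two densities differ word by word by a constant-order factor, and on a set of constant probability the discrepancy does not average out. Concretely, take $\mathcal A=\ac{a<b}$ with $p_a=p_b=1/2$, and let $A_n\subset\mathcal W_n$ be the set of words whose first block is $ab$, i.e.\ the primitive words beginning with $aba$ and ending with $b$. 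Then $\P_n(\mathcal W_n)=\tfrac14+\bigO{n\norm{p}_2^n}$ and $\P_n(A_n)=\tfrac1{16}+\bigO{n\norm{p}_2^n}$, so $\Pnt(A_n)\to\tfrac14$; on the other hand $\ell_1\equiv2$ on $A_n$, so $\P_n\pa{\phi^{-1}(A_n)}=2\,\P_n(A_n)\to\tfrac18$. The difference tends to $\tfrac18$, not to $0$, let alone exponentially fast. Aggregating over necklaces, as you propose, cannot rescue this: on unions of block orbits the discrepancy is governed by the fluctuations of the number of blocks around $n\,\P_n(\mathcal W_n)$, which are of order $\sqrt n$, so even there the error is polynomial in $n$, not exponential.

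For comparison, the paper offers no proof at all: it asserts that the argument of Lemma 2.1 of \cite{Ch} carries over verbatim with Lyndon words replaced by $\mathcal W_n$. It does not. That argument hinges on the fact that over Lyndon words all fibres of the analogous map have the \emph{same} cardinality $n$ (a primitive necklace contains exactly one Lyndon word), so the discrepancy factors as $\P_n(A)\,\abs{1/\P_n(\mathcal L_n)-n}\le\abs{1-n\,\P_n(\mathcal L_n)}=\P_n(\mathcal N_n)=\bigO{\norm{p}_2^n}$, where $\mathcal L_n$ denotes the Lyndon words of length $n$. Here the fibre cardinality $\ell_1(w)$ is genuinely variable, and that is exactly where both the paper's citation and your attempt break down; your instinct that this is the crux was correct, but the honest conclusion of your analysis is a counterexample, not a completion. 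Your intermediate steps (the fibre description, $p(w)\le\norm{p}_2^n$, and $\P_n(\mathcal N_n)=\bigO{n\norm{p}_2^n}$ via the divisor sum) are all sound. What is true, and is all that the transfer of Lemma \ref{N} from $\P_n$ to $\Pnt$ actually requires, is the one-sided multiplicative bound $\Pnt(A)\le\P_n\pa{\phi^{-1}(A)}/\P_n(\mathcal W_n)$, immediate from $A\subset\phi^{-1}(A)$ and the definition of $\Pnt$; combined with a lower bound on $\P_n(\mathcal W_n)$, this carries $\bigO{1/n}$ estimates across, which is the only use made of the lemma in this paper.
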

Note that $\norm{p}_{1}=1$, and that, under the assumption
$\ac{0<p_{1}<1}$, $\norm{p}_{\alpha}$ is strictly decreasing in
$\alpha$. Other well known inequalities include  $\norm{p}_{2}\le
\sqrt{\max p_{i}}$.\\

The lines of the proof of this lemma are exactly the same as in
the proof of Lemma 2.1 of \cite{Ch}, but here we work on the set
$\mathcal W_n$, instead of the set of Lyndon words, there.
So we remove the proof, referring
the reader to \cite[Lemma 2.1]{Ch}. \\

Let now for any word $w$ in $\mathcal A_n$ we denote by
$N_n^{a_1}(w)$ the number of runs of the letter $a_1$ in the word
$w$. We have then the following lemma:

\begin{lemma}
\label{N} (Number of runs of the letter $a_1$).
\begin{eqnarray*}
\Pnt\pa{N_n^{a_1}< \frac{p_1(1- p_1)}{2}\ n } = \bigO{n^{-1}}.
\end{eqnarray*}
\end{lemma}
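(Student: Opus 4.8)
The plan is to transfer the question to the i.i.d.\ measure $\Pn$ by means of Lemma~\ref{usarg}, and there to run a second-moment (Chebyshev) estimate. Write $A=\ac{w\in\mathcal W_n:\ N_n^{a_1}(w)<\tfrac{p_1(1-p_1)}{2}n}$, so the quantity to bound is $\Pnt(A)$. By Lemma~\ref{usarg}, $\Pnt(A)=\Pn(\phi^{-1}(A))+\bigO{\norm{p}_2^n}$, and since $0<p_1<1$ forces $\norm{p}_2<\norm{p}_1=1$, the error term decays exponentially and is absorbed into $\bigO{n^{-1}}$. Thus it suffices to estimate $\Pn(\phi^{-1}(A))$.

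Next I would control how the number of runs of $a_1$ changes under $\phi$. By Definition~\ref{pi} the word $\phi(w)$ is a cyclic rotation of $w$, and the number of runs of a \emph{fixed} letter is a cyclic invariant up to the wrap-around boundary: passing from the linear count $N_n^{a_1}(w)$ to that of $\phi(w)$ can only merge the first and last runs of $a_1$, and this happens exactly when $w$ both begins and ends with $a_1$. Consequently, whenever $w$ contains $a_1$ its smallest letter is $a_1$, so $\phi(w)\in\mathcal W_n$ begins with a run of $a_1$ and ends with a different letter, giving $N_n^{a_1}(\phi(w))\le N_n^{a_1}(w)\le N_n^{a_1}(\phi(w))+1$; when $a_1$ is absent both counts vanish and the inequality is trivial. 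Hence every $w\in\phi^{-1}(A)$ satisfies $N_n^{a_1}(w)<\tfrac{p_1(1-p_1)}{2}n+1$, so that
\[
\Pn\pa{\phi^{-1}(A)}\ \le\ \Pn\pa{N_n^{a_1}<\tfrac{p_1(1-p_1)}{2}n+1},
\]
and the problem is reduced to a deviation estimate for $N_n^{a_1}$ under the product measure.

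Under $\Pn$ the letters are independent with $\pr{w_i=a_1}=p_1$. Writing $Y_i=\mathbf 1_{\ac{w_i=a_1,\,w_{i-1}\neq a_1}}$ for $i\ge 2$ and $Y_1=\mathbf 1_{\ac{w_1=a_1}}$, one has $N_n^{a_1}=\sum_{i=1}^n Y_i$ with $\En[N_n^{a_1}]=p_1+(n-1)p_1(1-p_1)=p_1(1-p_1)\,n+\bigO{1}$. The indicator $Y_i$ depends only on the pair $(w_{i-1},w_i)$, so $Y_i$ and $Y_j$ are independent as soon as $\abs{i-j}\ge 2$; only the diagonal and the $\abs{i-j}=1$ covariances survive, each bounded and $\bigO{n}$ in number, whence $\var{N_n^{a_1}}=\bigO{n}$. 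Since the mean exceeds the threshold $\tfrac{p_1(1-p_1)}{2}n+1$ by at least $cn$ for some constant $c>0$ and all large $n$, Chebyshev's inequality gives
\[
\Pn\pa{N_n^{a_1}<\tfrac{p_1(1-p_1)}{2}n+1}\ \le\ \frac{\var{N_n^{a_1}}}{(cn)^2}\ =\ \frac{\bigO{n}}{c^2n^2}\ =\ \bigO{n^{-1}},
\]
which, combined with the first paragraph, yields the claim.

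The main obstacle I anticipate is the bookkeeping in the middle step rather than any deep inequality: one must keep straight that $N_n^{a_1}$ counts runs of the fixed smallest alphabet letter $a_1$ (not of the possibly larger smallest letter of a given word), and confirm that cyclic rotation alters this count by at most one. The only quantitative input is the linear growth of the variance, which rests on the finite-range dependence of the run-start indicators; it is precisely this $\bigO{n}$ bound, rather than a crude $\bigO{n^2}$, that produces the advertised $\bigO{n^{-1}}$ rate.
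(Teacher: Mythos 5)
Your proof is correct and follows essentially the same route as the argument the paper defers to (\cite{Ch}, Lemma 2.3): transfer from $\Pnt$ to the product measure $\Pn$ via Lemma \ref{usarg} (the error $\bigO{\norm{p}_{2}^{n}}$ being exponentially small since $\norm{p}_{2}<1$), followed by a second-moment/Chebyshev bound on the run-start indicators, whose finite-range dependence yields variance $\bigO{n}$ and hence the rate $\bigO{n^{-1}}$. Your explicit bookkeeping that the rotation $\phi$ changes the linear run count of $a_1$ by at most one is a sound way of making the transfer step precise, and introduces no gap.
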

We remove the proof of this lemma, again inviting the
reader to see \cite[Lemma 2.3]{Ch}. \\

By Lemma \ref{N}, one sees that the number of the blocks of a
word with length $n$, when $n$ is sufficiently large, is of the
order ${n}$ with a high probability as a word with length $n$ has
at most $\frac n 2$ blocks (the case where the smallest letter
repeats alternatively).

\section{Displacements of ranks }
\label{perm} We consider, at first, some notations and
definitions. As a consequence of Definition \ref{defi1}, any word
$w\in \mathcal W_n$ can be decomposed uniquely as
\[
w=B_1(w),B_2(w),\dots ,B_{N_n(w)}(w),
\]
in which $B_i(w)$'s stand for the blocks of $w$ and $N_n(w)$
denotes the number of the blocks. We denote the respective ranks
of the blocks of $w$ by
\[
r(w)=r_1(w),r_2(w),\dots,r_{N_n(w)}(w),
\]
called, briefly, the rank of $w$. Obviously,
$r(w)$ is a permutation of $1,2,\dots,N_n(w)$.

\begin{defi}
\label{blocorbit} For any $w=B_1,B_2,\dots ,B_{N}\in \mathcal W_n$
in which $B_i$'s are the blocks of $w$, we define $\beta
w=B_2,\dots ,B_{N},B_1$. Then $\langle \beta\rangle=\{Id,\beta,
\dots ,\beta^{{N}-1}\}$ is the group of cyclic permutations of the
blocks of a word in $\mathcal W_n$. We call the orbit $\langle
w\rangle_\beta$ of the blocks of $w$ under $\langle \beta\rangle$,
the \textit{block orbit} of $w$.
\end{defi}

\begin{proposition}
\label{invariant} The ranks of the blocks of a word in $\mathcal
W_n$ are invariant under the cyclic permutation of the blocks.
\end{proposition}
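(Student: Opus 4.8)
The plan is to observe that applying $\beta$ to a word $w=B_1,\dots,B_N$ (with $N=N_n(w)$) amounts to rotating the underlying sequence of letters by exactly the length of the first block: $\beta w=\tau^{\abs{B_1}}w$. Indeed, reading $w$ from position $\abs{B_1}+1$ onward yields precisely $B_2,\dots,B_N,B_1$. Consequently $\beta w$ belongs to the same necklace as $w$, so $\langle\beta w\rangle=\langle w\rangle$. First I would check that $\beta w$ is again an element of $\mathcal W_n$, so that its ranks are well defined: it begins with a run of the smallest letter (since $B_2$ does), it ends with a letter different from the smallest (every block terminates just before the next run of $a_w$, hence ends with a non-$a_w$ letter, and $\beta w$ ends with $B_1$), and it is primitive because it shares the necklace of the primitive word $w$.

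The key step is to match, block by block, the rotation of $\beta w$ that opens with a given block against the rotation of $w$ opening with the same block. Writing $s_i=\abs{B_1}+\dots+\abs{B_{i-1}}$ for the offset at which $B_i$ starts in $w$ (so $s_1=0$), the rotation of $w$ beginning with $B_i$ is $\tau^{s_i}w$. In $\beta w$ the block $B_i$ (for $i\ge 2$) starts at offset $s_i-\abs{B_1}$, hence opens the rotation $\tau^{s_i-\abs{B_1}}(\beta w)=\tau^{s_i-\abs{B_1}}\tau^{\abs{B_1}}w=\tau^{s_i}w$, while $B_1$, now sitting at the end of $\beta w$, opens $\tau^{\,n-\abs{B_1}}(\beta w)=\tau^{n}w=w=\tau^{s_1}w$. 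Thus the collection of words obtained by reading the necklace from each block is literally the same for $w$ and for $\beta w$; only the positional labelling of the blocks has been cyclically shifted.

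Since the rank of a block is defined purely as the lexicographic position of its associated rotation inside this collection, and since primitivity of $w$ forces all $n$ cyclic rotations, and a fortiori the sub-collection indexed by the blocks, to be pairwise distinct, the ordering is strict and the rank of each block is unambiguous and identical in $w$ and in $\beta w$. This last point is exactly what rules out the difficulty raised in the introduction: two blocks with equal content still give rise to \emph{distinct} rotations because their tails differ. Invariance under the full group $\langle\beta\rangle$ then follows by iterating over its generator. I expect the only delicate point to be the index bookkeeping in the block-to-rotation correspondence, together with the verification that $\beta$ preserves membership in $\mathcal W_n$; once these are in place the conclusion is immediate.
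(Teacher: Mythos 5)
Your proof is correct and follows the same idea as the paper's, which simply declares the claim evident from the definition of the ranks: under a cyclic shift of the blocks, each block opens exactly the same word of the necklace $\langle w\rangle$, so its rank is unchanged. Your write-up merely supplies the details the paper omits (the identity $\beta w=\tau^{\abs{B_1}}w$, membership of $\beta w$ in $\mathcal W_n$, and the offset bookkeeping), all of which check out.
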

\begin{proof} By definition of the ranks, it is evident that
 the ranks are permuted cyclically as the blocks are permuted.
\end{proof}

The following corollary is an immediate result of the above
proposition and the proof is left as it is evident.

\begin{corollary}
\label{uniposition} In any block orbit, the ranks are distributed
uniformly on all positions. In other words, if the number of the
words in a block orbit is $N$, the probability that the $i$-th
rank, $i=1,\dots,N,$ be in the position $k,\ k=1,\dots,N,$ is
equal to $\frac{1}{N}$.
\end{corollary}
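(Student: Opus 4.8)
The plan is to read the uniformity straight off Proposition~\ref{invariant}, using the fact that a block orbit of a primitive word carries exactly $N$ distinct words on which $\langle\beta\rangle$ acts by pure rotation of the block positions, and that each rank is glued to its block. First I would check that the $N$ words $w,\beta w,\dots,\beta^{N-1}w$ are pairwise distinct. Each $\beta^{m}w$ is the letter-rotation $\tau^{s_m}w$ of $w$ by the length $s_m=\abs{B_1}+\dots+\abs{B_m}$ of its first $m$ blocks, and the $N$ shift amounts $0,s_1,\dots,s_{N-1}$ are distinct integers lying in $\{0,\dots,n-1\}$. Since $w$ is primitive, distinct shifts in this range give distinct rotations, so the block orbit has exactly $N=N_n(w)$ elements, each receiving mass $\frac{1}{N}$ when a word is drawn uniformly from the orbit.

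The core step is to trace a single rank along the orbit. By Proposition~\ref{invariant}, the rank attached to a block is intrinsic to that block and to the necklace $\langle w\rangle$, so it is the same in every rotation $\beta^{m}w$: when the blocks are cyclically permuted, their ranks are cyclically permuted with them. Fix a rank $i\in\{1,\dots,N\}$ and let $p$ be its position in $w$, that is $r_p(w)=i$. In $\beta^{m}w=B_{m+1},\dots,B_N,B_1,\dots,B_m$, with indices read modulo $N$ in $\{1,\dots,N\}$, the block occupying position $k$ is $B_{m+k}$, so the block $B_p$ carrying rank $i$ sits in position $k$ precisely when $m+k\equiv p \pmod N$, i.e. $m\equiv p-k \pmod N$. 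Hence, for each target position $k\in\{1,\dots,N\}$, there is exactly one exponent $m\in\{0,\dots,N-1\}$ placing rank $i$ in position $k$.

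Summing over the orbit finishes the argument: as $m$ runs through $\{0,\dots,N-1\}$, rank $i$ visits each of the $N$ positions exactly once; since the $N$ words are equiprobable with weight $\frac{1}{N}$, the probability that rank $i$ lies in position $k$ equals $\frac{1}{N}$, which is the assertion. I do not expect a genuine obstacle here, as the statement is essentially the remark that a cyclic shift permutes positions bijectively; the only points deserving care are the modular index bookkeeping in the displayed congruence and the primitivity argument ensuring that the orbit really has $N$ (and not fewer) elements, so that the uniform weight $\frac{1}{N}$ is legitimate.
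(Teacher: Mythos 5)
Your proof is correct and is exactly the argument the paper has in mind: the paper states the corollary as an immediate consequence of Proposition \ref{invariant} and omits the proof as evident, and your rotation-plus-congruence bookkeeping (rank $i$ sits in position $k$ of $\beta^m w$ iff $m\equiv p-k \pmod N$, so each position is hit exactly once as $m$ ranges over $\{0,\dots,N-1\}$) is precisely that evident argument written out. Your additional check via primitivity that the block orbit really has $N$ pairwise distinct words, so that the uniform weight $\frac1N$ is legitimate, is a detail the paper silently assumes and is worth making explicit.
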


Let $\mathfrak{S}_{n}$ denote the set of permutations of $\{1,
\dots, n\}$. For $w \in \mathcal W_n$, and
$\sigma\in\mathfrak{S}_{N_n(w)}$, we set
$$\sigma.w=B_{\sigma(1)}(w),\dots,B_{\sigma(N_n(w))}(w).$$
Conditioning then $\sigma.w\in \mathcal W_n$, the rank of
$\sigma.w$, $r(\sigma.w)$, is also a permutation in
$\mathfrak{S}_{N_n(w)}$. We set $C(w)=\{\sigma.w\ :\ \sigma \in
\mathfrak{S}_{N_n(w)}\ \& \  \sigma.w\in \mathcal W_n \}$ and
$\mathcal C_n=\ac{C(w)\ :\ w\in \mathcal W_n}$. Let $\mathfrak
C_n$ denote the $\sigma$-algebra generated by $\mathcal C_n$. In
the following proposition we see that $\mathcal W_n$ is parted to
$C(w)$s:

\begin{proposition}
\label{equivclass}$\mathcal C_n$ is a partition of $\mathcal W_n$.
\end{proposition}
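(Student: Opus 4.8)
The plan is to show that $\mathcal C_n$ is a partition of $\mathcal W_n$ by verifying that the sets $C(w)$ arise as the equivalence classes of an equivalence relation on $\mathcal W_n$, from which the partition property is immediate. I would define $w \sim w'$ for $w, w' \in \mathcal W_n$ to mean that $w' = \sigma.w$ for some $\sigma \in \mathfrak{S}_{N_n(w)}$, i.e.\ $w'$ is obtained by permuting the blocks of $w$ and the result lies in $\mathcal W_n$. The claim then reduces to checking that $\sim$ is reflexive, symmetric, and transitive, so that $C(w)$ is exactly the equivalence class of $w$ and these classes tile $\mathcal W_n$.

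First I would note reflexivity: the identity permutation $Id \in \mathfrak{S}_{N_n(w)}$ gives $Id.w = w \in \mathcal W_n$, so $w \sim w$ and in particular $w \in C(w)$, which already shows $\bigcup_{w} C(w) = \mathcal W_n$. For symmetry, suppose $w' = \sigma.w \in \mathcal W_n$. The key observation is that permuting the blocks of $w$ by $\sigma$ does not change the multiset of blocks, so $w$ and $w'$ have the same blocks up to reordering; in particular $N_n(w') = N_n(w)$ and the blocks of $w'$ are precisely $B_{\sigma(1)}(w), \dots, B_{\sigma(N_n(w))}(w)$. Hence applying $\sigma^{-1}$ to the blocks of $w'$ recovers $w$, giving $w = \sigma^{-1}.w' \in \mathcal W_n$, so $w' \sim w$. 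Transitivity follows in the same vein: if $w' = \sigma.w$ and $w'' = \rho.w'$ with both in $\mathcal W_n$, then since the blocks of $w'$ are the $\sigma$-permuted blocks of $w$, composing gives $w'' = (\sigma \circ \rho^{-1})^{-1}.w$ (with the index bookkeeping done carefully), which is again a block permutation of $w$ landing in $\mathcal W_n$, so $w \sim w''$.

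The one genuinely delicate point — and the step I expect to be the main obstacle — is that the composition of two block permutations is again a block permutation of the \emph{original} word, which requires that the blocks of $w'$ be correctly identified with the blocks of $w$ under the relabeling induced by $\sigma$. This is where I must use the structural fact from Definition~\ref{defi1} that the decomposition of any word in $\mathcal W_n$ into blocks is \emph{unique}: each block begins with a run of the smallest letter and ends just before the next such run, so the block boundaries are intrinsically determined by the word itself. Because every element of $C(w)$ is a concatenation of exactly the same multiset of blocks and lies in $\mathcal W_n$, its unique block decomposition must be a reordering of $B_1(w), \dots, B_{N_n(w)}(w)$; there is no ambiguity in which substrings constitute the blocks. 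Once this is established, the permutations compose cleanly and transitivity holds.

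Having verified that $\sim$ is an equivalence relation with classes $C(w)$, the conclusion is immediate: distinct classes are disjoint and their union is all of $\mathcal W_n$, so $\mathcal C_n = \{C(w) : w \in \mathcal W_n\}$ is a partition of $\mathcal W_n$, as asserted.
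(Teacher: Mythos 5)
Your proposal is correct and takes essentially the same approach as the paper: the paper's own two-line proof rests on exactly the fact you isolate, namely that any $w'\in C(w)$ lies in $\mathcal W_n$ with the same multiset of blocks as $w$ (because the block decomposition of a word in $\mathcal W_n$ is intrinsic), from which the sets $C(w)$ either coincide or are disjoint. Your explicit equivalence-relation check merely spells this out; the only slip is in the transitivity formula, where with the action convention $(\sigma.w)_i = B_{\sigma(i)}(w)$ one gets $w'' = (\sigma\circ\rho).w$ rather than $(\sigma\circ\rho^{-1})^{-1}.w$, a harmless bookkeeping error since the argument only needs that $w''$ is \emph{some} block permutation of $w$ landing in $\mathcal W_n$.
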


\begin{proof}Assume that $w\in\mathcal W_n$, and $w'\in
C(w)$: then $w'\in \mathcal W_n$ and $w'$ has the same multiset
 of blocks as $w$ (it has the same blocks, with the same multiplicity).
As a consequence, for $w, w' \in \mathcal W_n$, either
$C(w)=C(w')$ or $C(w)\cap C(w')=\emptyset$.
\end{proof}

In the following proposition, which is the key proposition of this
result, we see that the {block orbits} divide $C(w)$:

\begin{proposition}
\label{BO} For any $w\in \mathcal W_n$, $C(w)$ is parted to the {block
orbits}.
\end{proposition}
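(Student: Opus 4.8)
The plan is to exhibit $C(w)$ as a disjoint union of block orbits by viewing those orbits as the orbits of the cyclic group $\langle\beta\rangle$ acting on $C(w)$. The crucial contrast with an arbitrary permutation $\sigma.w$ (which, as noted in the introduction, may leave $\mathcal W_n$ by failing to be primitive) is that a \emph{cyclic} rearrangement of the blocks can never leave $\mathcal W_n$; thus each block orbit lies entirely inside $C(w)$, and the orbits of a group action always partition the underlying set.

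First I would note, using Proposition \ref{equivclass}, that every $w'\in C(w)$ has the same multiset of blocks as $w$, and in particular the same number $N=N_n(w)$ of blocks. Hence the shift $\beta$ of Definition \ref{blocorbit} is defined on every element of $C(w)$, with $\beta^N=Id$. The remaining task is to check that $\beta$ preserves $C(w)$, that is, $\beta w'\in C(w)$ for every $w'\in C(w)$. Granting this, $\beta$ restricts to a permutation of $C(w)$, the block orbits $\langle w'\rangle_\beta$ are exactly the $\langle\beta\rangle$-orbits, and these partition $C(w)$.

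To see that $\beta w'\in C(w)$, I first observe that $\beta w'$ is a rearrangement of the blocks of $w'$, hence of the identical blocks of $w$, so $\beta w'=\sigma.w$ for a suitable $\sigma\in\mathfrak S_N$; it therefore suffices to verify $\beta w'\in\mathcal W_n$. The conditions on the extremal letters are automatic from the block structure: every block starts with a run of the smallest letter $a_w$ and ends, just before the next run of $a_w$, with a letter different from $a_w$. Consequently any concatenation of blocks, and in particular $\beta w'$, starts with a run of $a_w$, ends with a non-$a_w$ letter, and still has $a_w$ as its smallest letter.

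The one delicate point, which I expect to be the main obstacle, is primitivity, and I would settle it by a necklace argument. Read as a word in letters, $\beta w'$ is obtained from $w'$ by transferring its first $\abs{B_1(w')}$ letters to the end, i.e.\ $\beta w'=\tau^{\abs{B_1(w')}}w'$, so $\beta w'$ lies in the necklace $\langle w'\rangle$. Since primitivity depends only on the necklace (a word is primitive iff its necklace has exactly $n$ elements), $\beta w'$ is primitive because $w'$ is. Hence $\beta w'\in\mathcal W_n$, so $\beta w'\in C(w)$, and by iteration $\langle w'\rangle_\beta\subseteq C(w)$. As the $\langle\beta\rangle$-orbits partition $C(w)$, the set $C(w)$ is partitioned into block orbits, as claimed.
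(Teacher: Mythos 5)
Your proof is correct, and its overall skeleton matches the paper's: both arguments reduce the proposition to showing that every cyclic permutation of the blocks of a word in $\mathcal W_n$ again lies in $\mathcal W_n$ (so that each block orbit sits inside $C(w)$, and the orbits then partition it). Where you genuinely diverge is in how primitivity is preserved, and your route is simpler. The paper argues by contradiction: if some $\beta^i v$ were a power $u^r$, then $u$ would begin with the smallest letter and end with a different one, hence would itself factor into blocks; the blocks of $\beta^i v$ would then be periodically equal, forcing every cyclic block permutation of $\beta^i v$ --- in particular $v$ itself --- to split into equal factors, contradicting the primitivity of $v$. You instead observe that a cyclic permutation of blocks is just a cyclic rotation of letters, $\beta w' = \tau^{|B_1(w')|} w'$, so $\beta w'$ lies in the necklace $\langle w'\rangle$; since the paper defines primitivity precisely by the cardinality of the necklace, and rotations do not change the necklace, primitivity of $\beta w'$ is immediate. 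This one-line observation replaces the paper's periodicity bookkeeping entirely. Your write-up is also more explicit on two points the paper leaves tacit: that any concatenation of the blocks automatically begins with a run of $a_w$ and ends with a non-$a_w$ letter, and that $\beta$ restricts to a bijection of $C(w)$ with $\beta^N=Id$, so that the standard fact that group orbits partition a set can be invoked. The paper's longer argument does carry a small side benefit --- it exhibits the block-periodicity structure that any non-primitive rearrangement would have, which helps explain why \emph{non-cyclic} permutations can leave $\mathcal W_n$ --- but for the proposition as stated, your necklace argument is the more economical and equally rigorous one.
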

\begin{proof}As $C(w)$
contains the primitive words produced by permutations of the
blocks of $w$, it is sufficient to verify that all cyclic
permutations of the blocks of a word in $\mathcal W_n$ produce the
primitive words. Suppose now that a cyclic
permutation of the blocks of a word $v=B_1,B_2,\dots,B_{N}\in
\mathcal W_n$, for example $\beta^i
v=B_{i+1},B_{i+2},\dots,B_{N},B_1,\dots,B_{i}$ ($1\leq i\leq N$),
is a non-primitive word in the form $u^r$ for $u\in\mathcal A^*$
and $2\leq r\leq n$. As $u$ is a prefix and a
suffix of $\beta^i v$ and $\beta^i v$ is a word in $\mathcal W_n$,
$u$ is a word which begins by its smallest letter and end with a
letter different from the smallest letter. Therefore $u$ can be
factorized in the blocks and so has exactly the same blocks as
the first blocks of $\beta^i v$. Consequently, the blocks of $\beta^i v$
are periodically equals. That is, if $u$ has $j$ number of the blocks, they are
equals to the first $j$, second $j$, ... and last $j$ blocks of
$\beta^i v$. But this equalities, in $\beta^i v$, entail that for any
$k=1,2,\dots,N-j$, $B_k=B_{k+j}$ and that
all cyclic permutations of the blocks of $\beta^i v$, especially
$v$, will be the words which can be part to $\frac N j$ equal factors.
Therefore, any cyclic permutation of the blocks of a word in $\mathcal W_n$,
can not be a non-primitive word.
\end{proof}


\section{Proof of Theorem \ref{main}}
\label{proof} Let $\bar{B}(w)=\pa{B_i(w)}_{i\ge 0}$ be the
sequence of blocks of $w$, ended by an infinite sequence of empty
words, and let $\bar{r}(w)=\pa{r_i(w)}_{i\ge 0}$ be the
corresponding sequence of ranks.
\begin{lemma}
\label{condition2} The weight $p(.)$, $\bar{B}$, $\bar{r}$ and
$N_n$ are $\mathfrak C_n$-measurable, and
$$
\mathbb W_n = \sum_{C\in\mathcal C_n}\frac{\mathrm{Card}(C)\
p(C)}{\mathbb P_n(\mathcal W_n)}\ \mathbb U_C.
$$
Given that $w\in C$, the positions of the ranks of $w$,
$(r_{i}(w))_{1\le i\le N_{n}(C)}$, are distributed uniformly on
$\ac{1,2,\dots,N_n(C)}$.
\end{lemma}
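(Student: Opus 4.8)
The plan is to establish the three assertions in turn, leaning entirely on the partition structure built in Propositions \ref{equivclass} and \ref{BO}. First I would record the measurability claims. Since every word $w'\in C(w)$ has the same multiset of blocks as $w$ (this is exactly the observation used in the proof of Proposition \ref{equivclass}), the weight $p(w')=\prod_i p_{\ell_i}$ depends only on the multiset of letters and is therefore constant on each class $C\in\mathcal C_n$; the same is true of the number of blocks $N_n$. Being constant on each atom of the finite partition $\mathcal C_n$, both $p(\cdot)$ and $N_n$ are $\mathfrak C_n$-measurable. For $\bar B$ and $\bar r$ the relevant point is that on a fixed class $C$ the sequence of blocks ranges exactly over the arrangements of one fixed multiset, while $\bar r$ ranges over the permutations of $\ac{1,\dots,N_n(C)}$, so their underlying multisets are determined by $C$; I would phrase their measurability at that level.

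Next I would verify the mixture formula by a pointwise computation on singletons. Fix $C\in\mathcal C_n$ and $w\in C$. By the definition \eqref{PW} of $\Pnt$ one has $\Pnt(\ac{w})=p(w)/\P_n(\mathcal W_n)$, and since $p(w)=p(C)$ is constant on $C$ while $\mathbb U_C(\ac{w})=1/\mathrm{Card}(C)$ and $\mathbb U_{C'}(\ac{w})=0$ for $C'\neq C$, the only nonzero term of the right-hand side evaluated at $w$ is
\[
\frac{\mathrm{Card}(C)\,p(C)}{\P_n(\mathcal W_n)}\cdot\frac1{\mathrm{Card}(C)}=\frac{p(w)}{\P_n(\mathcal W_n)}=\Pnt(\ac{w}).
\]
As the classes partition $\mathcal W_n$ (Proposition \ref{equivclass}), this reproduces $\Pnt$ on every singleton, which proves the identity.

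For the uniformity statement I would exploit Proposition \ref{BO}: each class $C$ is a disjoint union of block orbits. Because the words of $\mathcal W_n$ are primitive, the block sequence of any $w\in C$ is aperiodic, so each block orbit contains exactly $N_n(C)$ distinct words. Hence all orbits inside $C$ share the same cardinality $N_n(C)$, and the uniform law $\mathbb U_C$ conditioned on any single orbit is the uniform law on that orbit. Applying Corollary \ref{uniposition} to each orbit (with $N=N_n(C)$) shows that, conditionally on the orbit, the position of the rank $i$ is uniform on $\ac{1,\dots,N_n(C)}$ for every $i$; averaging this identical conditional law over the orbits with the weights assigned by $\mathbb U_C$ yields the unconditional uniformity.

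The main obstacle is the step asserting that every block orbit in $C$ has full size $N_n(C)$: this is precisely what lets $\mathbb U_C$ split into uniform measures on equal-sized orbits, and it is where primitivity enters through Proposition \ref{BO} (a periodic block sequence would yield a non-primitive word, which $\mathcal W_n$ excludes). A secondary care point is the measurability of $\bar B$ and $\bar r$, whose ordered values genuinely vary within a class; only their common multiset, and hence $p(\cdot)$ and $N_n$, are truly $\mathfrak C_n$-measurable, so I would state the measurability of $\bar B,\bar r$ at the level of their multisets rather than their orderings.
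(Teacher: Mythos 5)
Your proof is correct and follows essentially the same route as the paper: constancy of $p(\cdot)$ on the classes of $\mathcal C_n$ gives the mixture/disintegration formula (which you verify explicitly on singletons), and the uniformity of the rank positions follows from Proposition \ref{BO} combined with Corollary \ref{uniposition}. Your write-up is in fact more careful than the paper's on two points it leaves implicit: that primitivity forces every block orbit inside a class $C$ to have full cardinality $N_n(C)$ (which is exactly what justifies applying Corollary \ref{uniposition} with $N=N_n(C)$ and then averaging over orbits), and that $\bar B$ and $\bar r$ are $\mathfrak C_n$-measurable only at the level of their multisets, since their ordered values vary within a class.
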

\begin{proof} The weight $p(w)$ depends only on the number of
letters $a_{1}$, $a_{2}$, \dots that $w$ contains, not on the
order of the letters in $w$, so that $p(.)$ is constant on each
$C\in\mathcal C_n$: thus, under $\mathbb W_n$, the conditional
distribution of $w$ given that $w\in C$ is $\mathbb U_C$. As a
consequence of Proposition \ref{equivclass}, the relation in Lemma
\ref{condition2} is just the disintegration of $\mathbb W_n$
according to its conditional distributions given $\mathfrak C_n$.
Finally, $\mathbb U_{C(w)}$ is the image of the uniform
probability on $[\mathfrak{S}_{N_n(w)} \times
\ac{w}]\bigcap\mathcal W_n$. Thus, by Propositions
\ref{invariant}, \ref{BO} and Corollary \ref{uniposition}, under
$\mathbb U_{C(w)}$, the positions of the ranks are distributed
uniformly on $\ac{1,2,\dots,N_n(C)}$. It follows that, under
$\mathbb W_n$, the conditional distribution of the positions of
the ranks given $C(w)$, or given $N_n$, is uniform too.
\end{proof}

We can see, by Lemma \ref{N}, the probability that the number of
blocks of a random word of $\mathcal W_n$, $N_n$, be of order $n$,
increases when $n$ increases. Now, let the position of the $i$-th
rank, denoted by $I_n$, is distributed uniformly on
$\ac{1,2,\dots,N_n(C)}$. We put ${\mathcal
I_n}\equiv{\frac{2}{p_1(1- p_1)n}}{ I_n}$, the normalized position
of the $i$-th rank by $\frac{p_1(1- p_1)}{2}n$. We shall see that
${\mathcal I_n}$ is approximately uniform, for its distribution is
close to the \emph{uniform distribution} on $[0,1]$, that we note
$\mathbb U$ in the rest of paper. This proximity is understood
with respect to the $\mathcal L_2$-\textit{Wasserstein metric}
$W_2(.,.)$. The $\mathcal L_2$-Wasserstein metric $W_2(.,.)$ is
defined by
\begin{eqnarray}
\label{metric}  W_2(\mu,\nu) &=& \inf_{{\scriptstyle\mathcal{L}(X)
=\mu}\atop{\scriptstyle\mathcal{L}(Y)=\nu}}\esp{}{\left\|X-Y\right\|_2^2}^{1/2},
\end{eqnarray}
in which $\mu$ and $\nu$ are probability distributions on $\mathbb
R^d$, and $\norm{.}_2$ denotes the Euclidean norm on $\mathbb
R^d$. In this paper, we consider essentially the case $d=1$. We
mention that convergence of $\mathcal L(X_n)$ to $\mathcal L(X)$
with respect to $W_2(.,.)$ entails convergence of $X_n$ to $X$ in
distribution, and we refer to \cite{Rachev} for an extensive
treatment of Wasserstein metrics. In what follows, we shall
improperly refer to the convergence of $X_n$ to $X$ with respect
to $W_2(.,.)$, meaning the convergence of their distributions. The
main reason for the asymptotic uniformity of $\mathcal I_n$ is a
form of convergence of the empirical distribution function
$G_n(t)$ to $t$, in the notations of \cite[Ch. 3.1, p.85, display
(3)]{Shorack}:

\begin{lemma}
\label{larcin} Consider a partition of $[0,1)$ into $n$ intervals
$\left[\frac{i-1}n,\frac{i}n\right);i=1,\dots,n$. For $\mathfrak
r$ a random cyclic permutation of the class of all cyclic
permutations of the intervals, we have
\[W_2(\mathfrak
r(i),\mathbb U)\le\sqrt{1/n}.\]
\end{lemma}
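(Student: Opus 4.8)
The plan is to make explicit the law of the real-valued random variable $\mathfrak r(i)$ and then to exhibit a single coupling realizing a small $\mathcal L_2$ transport cost to $\mathbb U$. First I would observe that, by the very structure of the cyclic permutations, the image of the $i$-th interval under a uniformly chosen cyclic permutation is the interval in position $i+K \pmod n$, where $K$ is uniform on $\ac{0,1,\dots,n-1}$; identifying each interval with its left endpoint, $\mathfrak r(i)$ is therefore distributed as the discrete uniform law on the grid $\ac{0,1/n,2/n,\dots,(n-1)/n}$, and crucially this law does not depend on the index $i$. This is exactly the continuous-scale counterpart of the uniformity asserted in Corollary \ref{uniposition}.

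Next I would construct an explicit coupling between this discrete law and $\mathbb U$. Let $K$ be uniform on $\ac{0,\dots,n-1}$ and let $V$ be uniform on $[0,1/n]$, independent of $K$. Setting $Y=K/n$ and $X=K/n+V$, one checks directly that $Y$ has the law of $\mathfrak r(i)$ while $X$ is uniform on $[0,1]$, i.e. $\mathcal L(X)=\mathbb U$, since the intervals $[K/n,(K+1)/n]$ tile $[0,1]$ and $X$ is uniform on the one selected. Because $\abs{X-Y}=V\le 1/n$ almost surely, the transport cost of this coupling is
\[
\esp{}{\abs{X-Y}^2}=\esp{}{V^2}=\int_0^{1/n} n\,v^2\,dv=\frac{1}{3n^2}.
\]

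Bounding the infimum in the definition \eqref{metric} by the cost of this particular coupling, I then obtain
\[
W_2\pa{\mathfrak r(i),\mathbb U}\le\sqrt{\frac{1}{3n^2}}=\frac{1}{\sqrt 3\,n}\le\frac{1}{n}\le\sqrt{\frac1n},
\]
which is the claimed bound, in fact with considerable room to spare. The only genuinely delicate point is the first step: pinning down what $\mathfrak r(i)$ means as a point of $[0,1]$ and recognizing that the cyclic structure forces its marginal to be the grid-uniform law independently of $i$. Once that identification is in place, the Wasserstein estimate is a routine single-interval coupling computation, and the generous exponent $\sqrt{1/n}$ leaves ample slack.
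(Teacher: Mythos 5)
Your proof is correct, and it in fact establishes a stronger bound than the one claimed. There is no in-paper argument to compare it with: the paper omits the proof of this lemma, deferring to \cite[Lemma 6.3]{Zoh}, and your argument is the natural quantization coupling such a statement calls for. Your identification of the law of $\mathfrak r(i)$ as the grid-uniform distribution on $\ac{0,1/n,\dots,(n-1)/n}$, independent of $i$ (this is where the cyclic structure, i.e.\ Corollary \ref{uniposition}, enters), is the right reading of the statement, and the choice of representative point is immaterial: whether one takes the left endpoint, right endpoint, or $i/n$, the same coupling gives $\abs{X-Y}\le 1/n$ almost surely. Your coupling $Y=K/n$, $X=K/n+V$ indeed has $\mathcal L(X)=\mathbb U$, its cost is $\E\cro{(X-Y)^2}=\frac{1}{3n^2}$, and bounding the infimum in (\ref{metric}) by this cost yields $W_2\pa{\mathfrak r(i),\mathbb U}\le \frac{1}{\sqrt 3\, n}\le \sqrt{1/n}$, as required; the extra factor of $\sqrt{1/n}$ of slack is harmless (and does not improve the final rate in Proposition \ref{distance}, which is anyway limited by the $\bigO{n^{-1}}$ exceptional event of Lemma \ref{N}).
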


This Lemma is also a specific case of \cite[Lemma 6.3]{Zoh} where
one can see for a proof.\\

Theorem \ref{main} is demonstrated when the following proposition
is proved:
\begin{proposition}
\label{distance} Let $\nu_n$ be the distribution of $\mathcal I_n$
under $(\mathcal W_n,\mathbb W_n)$. We have then:
\[
 W_2\pa{\nu_n,\mathbb U} =\bigO{\sqrt{1/n}}.
\]
As a consequence, under $\mathbb W_n$, the moments of $\mathcal
I_n$ converge to the corresponding moments of $\mathbb U$.
\end{proposition}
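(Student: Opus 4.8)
The plan is to prove Proposition \ref{distance} by conditioning on the partition $\mathcal{C}_n$ (equivalently on $N_n$) and then combining the per-orbit uniformity already established with the rate from Lemma \ref{larcin}. By Lemma \ref{condition2}, given $w \in C$ the position $I_n$ of the $i$-th rank is exactly uniform on $\ac{1,\dots,N_n(C)}$; after normalizing by $\frac{p_1(1-p_1)}{2}n$ we may treat $\mathcal{I}_n$, conditionally on $N_n$, as a point mass on the grid of $N_n$ equispaced intervals of $[0,1)$, shifted by a random cyclic permutation. Lemma \ref{larcin} gives $W_2(\mathfrak{r}(i),\mathbb{U}) \le \sqrt{1/N_n}$ for each such orbit. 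The first step, then, is to write $\nu_n$ as a mixture $\nu_n = \sum_k \pr{N_n = k}\,\nu_n^{(k)}$ over the possible block counts $k$, where $\nu_n^{(k)}$ is the conditional law of $\mathcal{I}_n$ given $N_n = k$.

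The second step is to bound $W_2(\nu_n,\mathbb{U})$ by the conditional Wasserstein distances. I would use the convexity (coupling) property of $W_2^2$: if one couples $\mathcal{I}_n$ to a $\mathbb{U}$-variable orbit-by-orbit, realizing the optimal coupling from Lemma \ref{larcin} within each block orbit of size $k$, the resulting global coupling gives
\[
W_2^2(\nu_n,\mathbb{U}) \le \sum_k \pr{N_n = k}\, W_2^2(\nu_n^{(k)},\mathbb{U}) \le \sum_k \pr{N_n=k}\,\frac{1}{k}= \esp{}{\frac{1}{N_n}}.
\]
There is a normalization subtlety here: Lemma \ref{larcin} compares the cyclically permuted grid to $\mathbb{U}$ on $[0,1]$, but $\mathcal{I}_n = \frac{2}{p_1(1-p_1)n}I_n$ rescales by $n$ rather than by $N_n$. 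So strictly the conditional law lives on $[0,\frac{2N_n}{p_1(1-p_1)n}]$, and one must control the discrepancy between this interval and $[0,1]$ — this is where Lemma \ref{N} enters, forcing $N_n \ge \frac{p_1(1-p_1)}{2}n$ with probability $1 - \bigO{n^{-1}}$, so the scaling factor $\frac{2N_n}{p_1(1-p_1)n}$ is $\ge 1$ and bounded (since $N_n \le n/2$ always).

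The third step is to turn $\esp{}{1/N_n}$ into the claimed $\bigO{1/n}$ bound on $W_2^2$. On the event $\ac{N_n \ge \frac{p_1(1-p_1)}{2}n}$ one has $1/N_n = \bigO{1/n}$ directly; on the complementary event, which by Lemma \ref{N} has probability $\bigO{n^{-1}}$, the contribution to $W_2^2$ is bounded by the diameter of the support (a constant, since $\mathcal{I}_n$ is supported in a bounded interval) times $\bigO{n^{-1}}$, hence again $\bigO{n^{-1}}$. Splitting the mixture over these two events and combining gives $W_2^2(\nu_n,\mathbb{U}) = \bigO{1/n}$, i.e. $W_2(\nu_n,\mathbb{U}) = \bigO{\sqrt{1/n}}$. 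The final assertion on moments follows from the general fact, cited in the excerpt, that $W_2$-convergence implies convergence in distribution together with convergence of second (hence, on a bounded support, all) moments.

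The main obstacle I anticipate is the bookkeeping around the mismatch between the grid-size normalization ($1/N_n$) used in Lemma \ref{larcin} and the deterministic normalization $\frac{p_1(1-p_1)}{2}n$ appearing in $\mathcal{I}_n$: one must show that replacing the per-orbit scale $N_n$ by the fixed scale $\frac{p_1(1-p_1)}{2}n$ perturbs the $W_2$ distance by at most $\bigO{\sqrt{1/n}}$, and this is exactly where the concentration of $N_n$ from Lemma \ref{N} must be invoked carefully. Everything else — the mixture decomposition and the convexity of $W_2^2$ under coupling — is routine once this normalization issue is handled.
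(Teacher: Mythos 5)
Your overall architecture is the same as the paper's: decompose $\nu_n$ as a mixture over the classes of $\mathcal C_n$ (the paper's display (\ref{desint1})), bound each conditional law using Lemma \ref{larcin}, invoke Lemma \ref{N} to split off a bad event of probability $\bigO{n^{-1}}$, and deduce moment convergence from boundedness of $\mathcal I_n$. Your treatment of the bad event (bounded support times $\bigO{n^{-1}}$ probability) is, if anything, more explicit than the paper's, which compresses both the per-class bound and the use of Lemma \ref{N} into the single display (\ref{desint2}).

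The problem is the step you yourself flag as the ``normalization subtlety'': it is a genuine gap, and the fix you sketch does not close it. Conditionally on $N_n=k$, the law $\nu_n^{(k)}$ is uniform on the grid $\ac{\frac{2j}{p_1(1-p_1)n}\,:\,1\le j\le k}$, which spans $[0,c_k]$ with $c_k=\frac{2k}{p_1(1-p_1)n}$; Lemma \ref{larcin} gives $W_2^2\pa{\nu_n^{(k)},\mathbb U}\le 1/k$ only when $c_k=1$. In general the monotone coupling shows $W_2\pa{\nu_n^{(k)},\mathbb U}\ge \frac{\abs{c_k-1}}{\sqrt 3}-\bigO{1/\sqrt k}$, so your inequality $W_2^2\pa{\nu_n^{(k)},\mathbb U}\le 1/k$ fails for every $k$ with $c_k$ bounded away from $1$. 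Your justification --- that on the good event $c_k\ge 1$ while $c_k\le 1/(p_1(1-p_1))$ always --- only shows $c_k$ is bounded, and a bounded-but-not-$\to 1$ dilation contributes a constant, not $\bigO{\sqrt{1/n}}$, to $W_2$. Lemma \ref{N} cannot supply what is missing: it is a one-sided tail bound, not a concentration statement, and says nothing against $N_n$ exceeding $\frac{p_1(1-p_1)}{2}n$; in fact the number of runs of $a_1$ in an i.i.d.\ word has mean of order $p_1(1-p_1)n$ and concentrates there, so $c_{N_n}$ concentrates near $2$ rather than near $1$. What your argument honestly proves is the statement for the self-normalized position $I_n/N_n$; to reach the stated deterministic normalization $\frac{p_1(1-p_1)}{2}n$ one needs a quantitative two-sided law of large numbers for $N_n$ whose constant matches the normalizer, i.e.\ $\frac{2N_n}{p_1(1-p_1)n}\to 1$. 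That ingredient appears neither in your proposal nor, explicitly, in the paper, whose proof buries exactly this point in the phrase ``by a straightforward extension of Lemmas \ref{larcin} and \ref{N}'' justifying (\ref{desint2}). So you have correctly located the crux of the proof, but the argument you offer for it would not go through.
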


\begin{proof}
With the notations of Lemma \ref{condition2}, for $C\in\mathcal
C_n$, let $\nu_C$ denote the image of $\mathbb U_C$ by $\mathcal
I_n$, so that
\begin{equation}
\label{desint1} \nu_n= \sum_{C\in\mathcal
C_n}\frac{\mathrm{Card}(C)\ p(C)}{\mathbb P_n(\mathcal W_n)}\
\nu_C.
\end{equation}
Consider the blocks $b_1\le\dots\le b_{N_n}$ of a word
$w\in\mathcal W_n$, sorted in increasing lexicographic order.
There exists at least one permutation $\tau\in\mathfrak S_{N_n}$
such that $w=b_{\tau(1)}\dots b_{\tau(N_n)}$. Let $\mathfrak
R_{N_n}$ denote the set of permutations $\delta$ such that
$\omega=\delta^{-1}.\tau^{-1}.w$ be an element of $C(w)$. Then,
for $\sigma$ is a random uniform element of $\mathfrak R_{N_n}$,
 $\omega=\sigma^{-1}.\tau^{-1}.w$ is a random uniform
element of $C(w)$. Set $\Upsilon(\sigma)=\mathcal
I_n(\sigma^{-1}.\tau^{-1}.w)$. Then, the distribution of
$\Upsilon$ is $\nu_{C(w)}$ by Corollary \ref{uniposition} and
Proposition \ref{BO}. Thus, by a straightforward extension of
Lemmas \ref{larcin} and \ref{N},
\begin{equation}
\label{desint2} W_2\pa{\nu_{C(w)},\mathbb U}\leq \esp{}{(\Upsilon-
U)^2\mid N_n^{a_1}> \frac{p_1(1- p_1)}{2}\ n }^{1/2} \le
\sqrt{\frac{2}{p_1(1- p_1)n}}.
\end{equation}
Finally, joining (\ref{desint1}) and (\ref{desint2}) we obtain
\[
 W_2\pa{\nu_n,\mathbb U} =\bigO{\sqrt{1/{n}}}.
\]

and since $0\le \mathcal I_n\le 1$, convergence of moments
follows.
\end{proof}


\end{document}